\newtheorem{thm}{Theorem}[section]
\theoremstyle{remark}
\newtheorem{rem}[thm]{Remark}
\theoremstyle{definition}
\newtheorem{defn}[thm]{Definition}
\newtheoremstyle{Claim}{}{}{\itshape}{}{\itshape\bfseries}{:}{ }{#1}
\theoremstyle{Claim}
\newcommand{\R}{\mathbb{R}}
\newcommand{\eps}{\varepsilon}
\theoremstyle{plain}
\begin{document}

\title[]{On the smoothness of solutions of fully nonlinear second order equations in the plane}

\author{Alessandro Goffi}
\address{Dipartimento di Matematica e Informatica ``Ulisse Dini'', Universit\`a degli Studi di Firenze, 
viale G. Morgagni 67/a, 50134 Firenze (Italy)}
\curraddr{}
\email{alessandro.goffi@unifi.it}

\subjclass[2020]{35B65}

\keywords{Fully nonlinear equations, Evans-Krylov theorems.}
 \thanks{The author is member of the Gruppo Nazionale per l’Analisi Matematica, la Probabilit\`a e le
loro Applicazioni (GNAMPA) of the Istituto Nazionale di Alta Matematica (INdAM). He has been supported by the INdAM-GNAMPA Project 2025 ``Stabilit\`a e confronto per equazioni di trasporto/diffusione e applicazioni a EDP non lineari''. The author wishes to thank Andrea Colesanti for the reference \cite{Talenti} and Matteo Focardi for discussions on the subject of the paper. 
}

\date{\today}

\begin{abstract}
We study interior $C^{2,\alpha}$ regularity estimates for solutions of fully nonlinear uniformly elliptic equations of the general form $F(D^2u)=0$ in two independent variables and without any geometric condition on $F$. By means of the theory of divergence form equations we prove that $C^2$ solutions of the previous equation are $C^{2,\bar\alpha(\lambda/\Lambda)}$ in the interior of the domain, where $0<\lambda\leq\Lambda$ are the ellipticity constants. We finally exploit the theory of nondivergence equations in the plane to obtain $C^{2,\tilde\alpha}$ regularity for an explicit exponent $\tilde\alpha=\tilde\alpha(\lambda/\Lambda)>\lambda/\Lambda$. 
\end{abstract}

\maketitle

\section{Introduction}
In this paper we study maximal $C^\alpha$-regularity (i.e. $C^{2,\alpha}$ estimates) for solutions of second order fully nonlinear uniformly elliptic equations, with ellipticity constants $0<\lambda\leq\Lambda$, of the form
\begin{equation}\label{eqintro}
F(D^2u)=0\text{ in }\R^n
\end{equation}
in the special case $n=2$, without any geometric requirement. We are interested in the regularizing effect of Evans-Krylov type
\[
C^{2}\rightarrow C^{2,\alpha}.
\]
Our main goal is to exhibit the explicit dependence of the universal exponent $\alpha\in(0,1)$ with respect to the ellipticity constants.
The following results are well-known:
\begin{itemize}
\item If $n=2$, $F$ is continuous and uniformly elliptic, a result by L. Nirenberg \cite{Nirenberg} shows that $u\in C^{2,\alpha}$ for some small universal $\alpha\in(0,1)$ depending on $\lambda/\Lambda$, see also \cite[Remark 2]{CaffarelliYuan} and \cite[Chapter 4]{FRRO}. If $F\in C^\infty$, then $u\in C^\infty$.
\item If $n\geq 5$ there are counterexamples to the $C^{1,1}$ regularity \cite{NTV}, and the best regularity is $C^{1,\alpha}$ by fundamental results proved by L. Caffarelli \cite{C89,CC} and N. Trudinger \cite{Trudinger88}. The cases $n=3,4$ remain at this stage the main open problems in the theory: solutions are known to be $C^{1,\alpha}$ under the sole uniform ellipticity, but we do not know whether they satisfy better regularity, cf. \cite{NTV}.
\item If $F$ is uniformly elliptic and either concave or convex (i.e. Bellman equations), solutions are known to be $C^{2,\alpha}$, $\alpha\in(0,1)$ small, by a result of L.C. Evans \cite{Evans82} and N.V. Krylov \cite{Krylov82}. This was earlier proved through the theory of variational inequalities by H. Br\'ezis and L.C. Evans \cite{BrezisEvans} for the maximum of two operators. If $F$ is only continuous the maximal regularity of Bellman equations is one of the major open problems. When $n=2$ this was solved in the case $F(D^2u)=\min\{L_1u,L_2u\}$ with $L_1,L_2$ having constant coefficients, and solutions of these special Bellman PDEs are $C^{2,1}$, as proved by L. Caffarelli-D. De Silva-O. Savin \cite{CDSS}. Higher smoothness than $C^{2,\alpha}$ ($\alpha$ implicit) remains unsettled when $n\geq3$.
\item Other $C^{2,\alpha}$ results are known under smallness conditions. O. Savin \cite{Savin} proved it in the case of flat solutions, while earlier results by L.C. Evans and P.L. Lions \cite{EvansLions} provided smoothness of solutions when the equation has a large zero-th order coefficient. More recent advances are known under Cordes-type conditions \cite{HuangAIHP}.
\item X. Cabr\'e and L. Caffarelli raised the question about $C^{2,\alpha}$ smoothness when $F$ is weaker than concave/convex. There are few results in this direction, see e.g. \cite{CaffarelliYuan,CCjmpa,HuangAIHP,Collins,KrylovBookNew,GoffiJMPA} and references therein. All these results are known for a small $\alpha$.
\end{itemize}
H. Br\'ezis \cite[Comments and open questions- (2)]{Brezis} and H. Br\'ezis-L.C. Evans \cite[Remark 4.2]{BrezisEvans} asked whether one can expect better regularity than $C^{2,\alpha}$ for $\alpha$ small and $W^{3,p}$ for some $p$. The H\"older exponent is implicit since one usually applies the De Giorgi-Nash-Moser or Krylov-Safonov theory to the equation satisfied by the second derivatives. $C^{2,\alpha}$ smoothness is false when $n\geq 5$ by the counterexamples in \cite{NTV}, but it remains an interesting question in the lower dimensional cases $n\leq4$, starting from the plane $\R^2$, and for convex equations. We are only aware of a result by Q. Huang \cite{HuangProcAMS} showing that if $F$ is uniformly elliptic, then $u\in C^{2,\alpha}$ for any $\alpha\in(0,1)$ and any $n\geq2$, with estimates depending on the first derivatives of $F$, whenever the equation has a polynomial Liouville property. In the special case $F(D^2u)=\min\{L_1u,L_2u\}$ an optimal $C^{2,1}$ estimate is given in \cite{CDSS}. \\
As far as $W^{3,p}$ estimates are concerned, these are known for solutions of \eqref{eqintro} in any dimension for an implicit $p<1$ depending on $n$ and $\lambda/\Lambda$, as a consequence of \cite[Theorem 1.4]{NamLe} or \cite{ASS}. It is enough to apply the $W^{2,\eps}$-estimate of \cite{EvansTAMS,CC,Lin} to the equation solved by the directional derivative $\partial_e u$, for which estimates of the decay of the integrability are available \cite{ASS,NamLe,Mooney,Nascimento}. The authors in \cite{ASS} observed that $\eps=\frac{2\lambda}{\Lambda+\lambda}$ is the universal upper bound in the two-dimensional case for solutions of $\mathcal{M}^+_{\lambda,\Lambda}(D^2u)\geq0$. However, this exponent can be improved when $\lambda=\Lambda$, where $\eps=1$. The recent manuscript \cite{NTjde} addressed lower bounds for the exponent $\eps$, proving in particular the new estimate $\varepsilon > \frac{1.629 \lambda}{\lambda+\Lambda}$.\\

Our paper gives some quantitative results in the first direction, proving some $C^{2,\alpha}$ estimates and providing an explicit lower bound for the H\"older exponent depending on $\lambda/\Lambda$. This can be considered as a counterpart in $C^{2,\alpha}$ spaces of the results recently obtained in \cite{NTjde} for $W^{2,\eps}$ estimates. We use the linear theory of equations in divergence and nondivergence form and exploit some known regularity properties of their solutions that depend only on the bounds of the coefficients, but not on their regularity properties.  To our knowledge these results have not appeared in print anywhere. We first prove that, in the general case, $C^2$ solutions of \eqref{eqintro} when $n=2$ and $F$ is uniformly elliptic are more regular and satisfy
\[
u\in C^{2,\bar\alpha}.
\]
with $\bar\alpha$ explicit and depending on $\lambda$ and $\Lambda$. These results use that the second derivatives solve a suitable equation in divergence form, see Section \ref{sec;recall}. We then improve this bound via the theory of equations in nondivergence form, proving
\[
u\in C^{2,\tilde\alpha}
\]
where
\[
\tilde\alpha>\frac{\lambda}{\Lambda}.
\]
The results apply to general uniformly elliptic Bellman-Isaacs equations and even to Bellman equations given by the minimum/maximum of a family of linear operators, for which a general result is not known, except for \cite{CDSS} where $F$ is the minimum of two operators in the plane.\\
It is worth mentioning that there is a counterpart of this theory for nonlinear equations in divegence form, whose main model is the $p$-Laplacian. An explicit H\"older exponent, depending on $p$, for the regularity of the gradient of solutions of the $p$-Laplace equation was proved in \cite{ManfrediIwaniec}: solutions of $p$-harmonic functions in the plane are $C^{1,\alpha}$ with
\[
\alpha=\frac16\left(\frac{p}{p-1}+\sqrt{1+\frac{14}{p-1}+\frac{1}{(p-1)^2}}\right).
\] 
\smallskip

\textit{Outline}. In Section \ref{sec;prel} we recall the notion of uniform ellipticity for \eqref{eqintro}. Section \ref{sec;recall} provides a short survey of regularity properties of solutions to equations in divergence and nondivergence form in the plane. Section \ref{sec;div} exploits the result for equations in divergence form to prove $C^{2,\alpha}$ and $W^{3,p}$ estimates for fully nonlinear equations, while the main result in Section \ref{sec;nondiv} uses the nonvariational regularity theory in the plane to prove $C^{2,\alpha}$ estimates for a different value of $\alpha$.
\section{Preliminaries on fully nonlinear equations}\label{sec;prel}
We will consider equations of the form \eqref{eqintro} where $F:\mathrm{Sym}_n\to\R$, where $\mathrm{Sym}_n$ is the space of $n\times n$ symmetric matrices. We will assume that $F$ is continuous and uniformly elliptic, as described by the next definition.
\begin{defn}
$F$ is uniformly elliptic with ellipticity constants $0<\lambda\leq\Lambda$ if for every symmetric matrices $M,N\in\mathrm{Sym}_n$, $N\geq0$, we have
\[
\lambda\mathrm{Tr}(N)\leq F(M+N)-F(M)\leq\Lambda\mathrm{Tr}(N).
\]
\end{defn}
Extending $F$ as a function $F:\R^{n\times n}\to\R$ it is possible to define
\[
F_{ij}(M)=\frac{\partial F}{\partial m_{ij}}(M).
\]
This holds even if $F$ is not $C^1$, since the uniform ellipticity implies the Lipschitz regularity in the matrix entry: in this case the derivatives must be understood almost everywhere. If $F$ is uniformly elliptic according to the previous definition, then the linearization of $F$ is uniformly elliptic with the same ellipticity constants, namely for all $M$
\[
\lambda \mathbb{I}_n\leq F_M(M)\leq\Lambda \mathbb{I}_n,
\]
where $F_M(M)=\left(\frac{\partial F}{\partial m_{ij}}(M)\right)_{ij}$. This can be checked via the previous definition considering, for $M\in\mathrm{Sym}_n$ the function $G(t)=F(M+tN)$, $t>0$, and taking $N=\xi\otimes \xi$, $\xi\in\R^n$, which is nonnegative semidefinite. 
\section{Results for linear equations with bounded and measurable coefficients: a brief survey}\label{sec;recall}
\subsection{Divergence form equations}
We first recall the following important result due to De Giorgi-Nash-Moser about the H\"older regularity of solutions of divergence form equations with bounded and measurable coefficients.
\begin{thm}[De Giorgi-Nash-Moser]
Let $u\in H^1(B_1)$ be a solution of $\mathrm{div}(A(x)Du)=0$ in $B_1\subset\R^n$ with $A$ measurable and such that $\lambda \mathbb{I}_n\leq A\leq\Lambda \mathbb{I}_n$, $0<\lambda\leq\Lambda$. Then there exist constants $\alpha\in(0,1)$ and $C>1$ universal such that
\[
\|u\|_{C^{0,\alpha}(B_\frac12)}\leq C\|u\|_{L^2(B_1)}.
\]
\end{thm}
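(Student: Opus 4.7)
The plan is to follow the classical De Giorgi strategy: first prove local boundedness by Moser iteration, then establish an oscillation decay estimate for bounded solutions, and finally iterate the latter to produce Hölder continuity.

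First I would derive the Caccioppoli (reverse Poincaré) inequality. Testing the equation with $\eta^2(u-k)_\pm$ for a smooth cutoff $\eta\in C_c^\infty(B_1)$ and any level $k\in\R$, and exploiting $\lambda\mathbb{I}_n\leq A\leq\Lambda\mathbb{I}_n$ together with the Cauchy-Schwarz inequality, one obtains
\[
\int \eta^2\,|D(u-k)_\pm|^2\,dx\leq \frac{C\Lambda^2}{\lambda^2}\int |D\eta|^2\,(u-k)_\pm^2\,dx.
\]
Coupling this with the Sobolev embedding on nested balls sets up a Moser iteration that produces $\|u\|_{L^\infty(B_{3/4})}\leq C\|u\|_{L^2(B_1)}$. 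After this reduction it suffices to prove a $C^{0,\alpha}$ estimate for bounded $u$.

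The core step is De Giorgi's oscillation lemma: if $u$ is a subsolution in $B_1$ with $u\leq 1$ and $|\{u\leq 0\}\cap B_1|\geq |B_1|/2$, then there exists a universal $\gamma=\gamma(\lambda/\Lambda)\in(0,1)$ with
\[
\sup_{B_{1/2}} u \leq 1-\gamma.
\]
I would prove it by applying the Caccioppoli estimate at a sequence of levels $k_j=1-2^{-j}\gamma$ on a sequence of concentric balls shrinking to $B_{1/2}$, and combining it with the De Giorgi isoperimetric inequality that controls the measure of a ``middle'' superlevel set by the energy on two nested superlevel sets. This produces a nonlinear iteration of the form $a_{j+1}\leq C b^j a_j^{1+\beta}$ whose smallness threshold, once attained by choosing $\gamma$ small, forces the top truncation to vanish. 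Applied to both $u$ and $-u$ on rescaled balls, the lemma yields the geometric decay
\[
\osc(u,B_{r/2})\leq (1-\gamma)\,\osc(u,B_r),\qquad r\leq 1/2,
\]
which iterates to Hölder continuity with exponent $\alpha=-\log_2(1-\gamma)\in(0,1)$ and the stated constant.

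The main obstacle in my view is the oscillation lemma, specifically the quantitative isoperimetric/density estimate needed to bootstrap ``sufficiently small superlevel sets at every scale'' into ``an empty superlevel set in the limit''; all remaining steps (Caccioppoli, Moser iteration, iteration of oscillations) are essentially mechanical. As usual in this theory, the resulting exponent $\alpha$ depends on $\lambda/\Lambda$ only implicitly through the constant $\gamma$ furnished by the iteration, and is not computable in closed form.
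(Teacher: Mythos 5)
The paper states this as a classical result (Theorem 3.1) in its survey section and supplies no proof, so there is nothing internal to compare against. Your outline is a correct sketch of the standard De Giorgi argument: the Caccioppoli inequality (with the right constant $C\Lambda^2/\lambda^2$ after Young's inequality), Moser (or De Giorgi) iteration for the local $L^\infty$ bound, De Giorgi's oscillation lemma built from the level-set iteration plus the isoperimetric/density estimate, and geometric decay of oscillation giving H\"older continuity with $\alpha=-\log_2(1-\gamma)$. Your identification of the isoperimetric step as the genuine difficulty, and your remark that the resulting exponent $\alpha$ depends on $\lambda/\Lambda$ only implicitly, are both accurate and in fact are the very points the paper is driving at: the rest of Section 3 and the main results (Theorems 4.1 and 5.1) are devoted to the two-dimensional situation precisely because there, unlike in the general De Giorgi--Nash--Moser theorem, the exponent can be made explicit (Piccinini--Spagnolo, Treskunov, Talenti, Baernstein--Kovalev). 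One minor stylistic remark: the statement assumes only $u\in H^1(B_1)$, so the weak formulation and admissibility of the test functions $\eta^2(u-k)_\pm$ should be made explicit (they belong to $H^1_0(B_1)$ precisely when $\eta$ has compact support), but this is routine.
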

This result can be made quantitative for equations posed on $\R^2$. When $A$ is symmetric, an explicit exponent $\alpha=\frac{\lambda}{2\Lambda}$ was found by C. Morrey \cite{MorreyBook}, while \cite{Widman} provided the exponent $\alpha=\frac{1}{8}\sqrt{\frac{\lambda}{\Lambda}}$. The sharp result is contained in the following:
\begin{thm}[Piccinini-Spagnolo]\label{picspa1}
Let $u\in H^1(B_1)$ be a solution of $\mathrm{div}(A(x)Du)=0$ in $B_1\subset\R^2$ with $A\in\mathrm{Sym}_2$ measurable and such that $\lambda \mathbb{I}_2\leq A\leq\Lambda \mathbb{I}_2$, $0<\lambda\leq\Lambda$. Then there exists a constant $C>1$ universal such that
\[
\|u\|_{C^{0,\sqrt{\lambda/\Lambda}}(B_\frac12)}\leq C\|u\|_{L^2(B_1)}.
\]
\end{thm}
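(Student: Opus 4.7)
The plan is to translate the problem into a statement about a planar quasi-regular map via a conjugate function construction, and then to invoke the classical Mori-type H\"older estimate for such maps. First, since $A\nabla u$ is a divergence-free vector field on the simply connected disk $B_1$, the Poincar\'e lemma yields a scalar $v\in H^1(B_1)$, unique up to an additive constant, with $\nabla v=JA\nabla u$, where $J$ denotes the $\pi/2$-rotation in $\R^2$. A direct verification shows that $v$ itself satisfies a symmetric divergence-form equation with coefficient matrix $(\det A)^{-1}A$ of the same ellipticity ratio.

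Next, after rescaling $A$ globally by $(\lambda\Lambda)^{-1/2}$---which leaves the equation unchanged---the eigenvalues of $A(x)$ lie inside the reciprocal interval $[\sqrt{\lambda/\Lambda},\sqrt{\Lambda/\lambda}]$. Writing $f:=u+iv$ and computing the Wirtinger derivatives in a local orthonormal eigenframe $(\xi,\eta)$ of $A(x)$ with eigenvalues $\lambda_1(x)\leq\lambda_2(x)$ gives
\[
\partial_{\bar z}f=\tfrac12\bigl((1-\lambda_1)u_\xi-i(\lambda_2-1)u_\eta\bigr),\qquad \partial_z f=\tfrac12\bigl((1+\lambda_1)u_\xi-i(1+\lambda_2)u_\eta\bigr).
\]
The elementary observation that $|(1-t)/(1+t)|$ takes the same value at $t$ and $t^{-1}$ implies that both coefficients $(1-\lambda_i)^2-k^2(1+\lambda_i)^2$ are nonpositive for $\lambda_i\in[\sqrt{\lambda/\Lambda},\sqrt{\Lambda/\lambda}]$, where $k:=(\sqrt{\Lambda/\lambda}-1)/(\sqrt{\Lambda/\lambda}+1)$. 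Summing with the nonnegative weights $u_\xi^2$ and $u_\eta^2$ yields the pointwise Beltrami bound $|\partial_{\bar z}f|\leq k\,|\partial_z f|$ a.e., so that $f$ is $K$-quasi-regular on $B_1$ with dilatation $K=(1+k)/(1-k)=\sqrt{\Lambda/\lambda}$.

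Finally, the classical Mori-type H\"older estimate for planar $K$-quasi-regular mappings yields $|f(x)-f(y)|\leq C(K)\,\|f\|_{L^2(B_1)}\,|x-y|^{1/K}$ for $x,y\in B_{1/2}$, and taking real parts gives the stated bound for $u$ with exponent $1/K=\sqrt{\lambda/\Lambda}$. The crux of the argument is the sharp identification of the dilatation as $\sqrt{\Lambda/\lambda}$ rather than the naive $\Lambda/\lambda$: this square root emerges because the global normalization $A\mapsto A/\sqrt{\lambda\Lambda}$ places the eigenvalues symmetrically about $1$ in logarithmic scale, so that both direction-dependent ratios $|1-\lambda_i|/(1+\lambda_i)$ attain their common maximum $k$ precisely at the endpoints of the reciprocal interval, balancing the anisotropy equally between $u$ and its conjugate $v$. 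The remaining ingredients---the Poincar\'e lemma in Step~1 and the Ahlfors--Mori H\"older estimate in Step~3---are classical, so the entire difficulty of the sharp exponent is encoded in the planar, symmetric, pointwise Beltrami computation.
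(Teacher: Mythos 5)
The paper does not prove Theorem~\ref{picspa1}: it is stated in a survey section and attributed directly to Piccinini--Spagnolo \cite{PiccininiSpagnolo}, so there is no in-paper proof to compare against. What you give is the well-known alternative route through the planar Beltrami equation rather than Piccinini--Spagnolo's original oscillation-decay argument, and the core computation is correct: with $\nabla v=JA\nabla u$, $f=u+iv$, and the normalization $A\mapsto A/\sqrt{\lambda\Lambda}$, the pointwise bound in the eigenframe gives $|\partial_{\bar z}f|\le k|\partial_z f|$ with $k=(1-\sqrt{\lambda/\Lambda})/(1+\sqrt{\lambda/\Lambda})$, hence $K=(1+k)/(1-k)=\sqrt{\Lambda/\lambda}$ and H\"older exponent $1/K=\sqrt{\lambda/\Lambda}$, exactly as claimed. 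The observation that the $\log$-symmetric normalization makes the two ratios $|1-\lambda_i|/(1+\lambda_i)$ peak at the same value $k$ is precisely the sharp point, and you identify it clearly.

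A few small details should be filled in to make the argument airtight. First, the eigenframe $(\xi,\eta)$ must be chosen positively oriented (a rotation, not a reflection), otherwise $\partial_z$ and $\partial_{\bar z}$ swap under the frame change; since the moduli ratio is rotation-invariant this is easily arranged but should be said. Second, Mori's theorem as usually stated is for $K$-quasiconformal \emph{homeomorphisms}; for a quasiregular $f$ one first invokes the Stoilow factorization $f=h\circ g$ with $g$ quasiconformal and $h$ holomorphic, and then uses local Lipschitz continuity of $h$ plus Mori for $g$ to obtain the interior $C^{1/K}$ estimate. Third, the passage from the normalized Mori estimate to the stated bound $\|u\|_{C^{0,\sqrt{\lambda/\Lambda}}(B_{1/2})}\le C\|u\|_{L^2(B_1)}$ requires fixing the additive constant in $v$ (e.g.\ zero mean on $B_{3/4}$), a Caccioppoli estimate to bound $\|\nabla u\|_{L^2(B_{3/4})}$ by $\|u\|_{L^2(B_1)}$, and a local $L^\infty$--$L^2$ bound for $f$. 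None of these are difficult, but they are not quite ``classical and therefore omitted'' at the level of rigor one would expect for a sharp-exponent theorem.
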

We also have the following generalization valid for certain non-symmetric matrices:
\begin{thm}[Treskunov]\label{treskunov}
Let $u\in H^1(B_1)$ be a solution of $\mathrm{div}(A(x)Du)=0$ in $B_1\subset\R^2$ with
\[
A=\begin{pmatrix}
a(x) & b(x)+d(x)\\
b(x)-d(x) & c(x)
\end{pmatrix},
\]
where $a,b,c,d$ are bounded and measurable, $|d|\leq \eta$ and for all $\xi\in\R^2$
\[
\nu|\xi|^2\leq a\xi_1^2+2b\xi_1\xi_2+c\xi_2^2\leq |\xi|^2,\ \nu\leq1.
\]
Then $u\in C^{0,\frac{2}{\pi}\sqrt{\nu}\arctan\left(\frac{\sqrt{\nu}}{\eta}\right)}(B_\frac12)$. When $\eta=0$ (hence $d=0$), we have $u\in C^{0,\sqrt{\nu}}(B_\frac12)$, which is the exponent found in Theorem \ref{picspa1} (after normalization of the ellipticity constants), and
\[
\frac{2}{\pi}\sqrt{\nu}\arctan\left(\frac{\sqrt{\nu}}{\eta}\right)<\sqrt{\nu}.
\]
\end{thm}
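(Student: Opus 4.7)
The plan is to adapt the Piccinini-Spagnolo energy-decay strategy to keep track of the antisymmetric contribution $d(x)J$, where $J=\left(\begin{smallmatrix}0 & 1\\ -1 & 0\end{smallmatrix}\right)$. Since $u$ solves a divergence-form equation in the plane, by the Morrey-Campanato characterization of Hölder continuity it suffices to establish that the Dirichlet energy $E(r):=\int_{B_r}\langle A(x)Du,Du\rangle\,dx$ decays as $E(r)\leq Cr^{2\beta}$ with $\beta=\frac{2}{\pi}\sqrt{\nu}\arctan(\sqrt{\nu}/\eta)$. The core of the argument will be a Gronwall-type inequality $E'(r)/E(r)\geq 2\beta/r$ on $(0,1/2)$.

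The differential inequality is produced as follows. Choosing $c_r$ to be the mean of $u$ on $\partial B_r$, integration by parts together with the equation gives $E(r)=\int_{\partial B_r}(u-c_r)\langle ADu,\nu\rangle\,d\sigma$, while the coarea formula yields $E'(r)=\int_{\partial B_r}\langle ADu,Du\rangle\,d\sigma$ for a.e.\ $r$. Passing to polar coordinates, write $Du=u_r\hat r+r^{-1}u_\theta\hat\theta$ on $\partial B_r$. Since $\langle J\hat r,\hat r\rangle=0$, the antisymmetric piece contributes only via a cross term $\pm d\,r^{-1}u_\theta$ to $\langle ADu,\hat r\rangle$, whereas the symmetric part is a positive quadratic form in $(u_r,r^{-1}u_\theta)$ with bounds $\nu(u_r^2+r^{-2}u_\theta^2)$ from below and $u_r^2+r^{-2}u_\theta^2$ from above. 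A pointwise-in-$\theta$ weighted Cauchy-Schwarz inequality, combined with the sharp Wirtinger estimate $\int_0^{2\pi}(u-c_r)^2\,d\theta\leq\int_0^{2\pi}u_\theta^2\,d\theta$, produces a bound of the form $E(r)^2\leq\Phi(\nu,\eta)\,r\,E(r)\,E'(r)$ for an explicit constant $\Phi(\nu,\eta)$.

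The value $\beta=\frac{2}{\pi}\sqrt{\nu}\arctan(\sqrt{\nu}/\eta)$ emerges precisely from the optimization of the Cauchy-Schwarz weight, which reduces to evaluating a trigonometric integral of the type $\int_0^{2\pi}(\nu\cos^2\theta+\sin^2\theta-2\eta\sin\theta\cos\theta)^{-1}\,d\theta$ via the Weierstrass substitution $t=\tan(\theta/2)$; this is where the inverse tangent appears. Once the inequality $E'(r)/E(r)\geq 2\beta/r$ is established, integration and Morrey-Campanato finish the argument. The comparison $\frac{2}{\pi}\sqrt{\nu}\arctan(\sqrt{\nu}/\eta)<\sqrt{\nu}$ follows from $\arctan(x)<\pi/2$, and the limit $\eta\to 0^+$ recovers $\beta=\sqrt{\nu}$, i.e.\ Theorem \ref{picspa1}.

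The main obstacle lies in this last computation: a naïve splitting of $\eta$ between the boundary Cauchy-Schwarz and the Wirtinger step would only produce a weaker exponent of the form $\sqrt{\nu-\eta^2}$ (or similar). Recovering the sharp Treskunov constant requires treating the symmetric and antisymmetric contributions in the boundary quadratic form simultaneously and evaluating the resulting trigonometric integral exactly. This is a delicate but essentially routine calculation once the correct variational problem on the circle is identified.
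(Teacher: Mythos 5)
This theorem is quoted in the paper's survey section and attributed to Treskunov \cite{Treskunov74,Treskunov87}; the paper gives no proof of it, so there is no in-paper argument to compare against. Judged on its own merits, your scaffolding is the right one: the Piccinini--Spagnolo scheme via $E'(r)/E(r)\geq 2\beta/r$, the boundary representation $E(r)=\int_{\partial B_r}(u-c_r)\langle ADu,\hat r\rangle\,d\sigma$, the coarea identity for $E'(r)$, and a generalized Wirtinger inequality on the circle are all consistent with Treskunov's method (the 1985 paper is indeed titled \emph{A generalization of Wirtinger's inequality\ldots}), and your structural observation that the bulk energy $\langle ADu,Du\rangle=\langle A^sDu,Du\rangle$ sees only the symmetric part while the radial flux $\langle ADu,\hat r\rangle$ picks up the antisymmetric cross term $d\,r^{-1}u_\theta$ is correct and is the genuine source of difficulty.

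The gap is in the step you flag as the ``main obstacle'' and then defer as routine. The integral you put forward as the source of the exponent, $\int_0^{2\pi}\bigl(\nu\cos^2\theta+\sin^2\theta-2\eta\sin\theta\cos\theta\bigr)^{-1}\,d\theta$, evaluates exactly to $2\pi/\sqrt{\nu-\eta^2}$: writing the integrand as $\langle M\hat\theta,\hat\theta\rangle^{-1}$ with $\hat\theta=(\cos\theta,\sin\theta)$ and $M=\bigl(\begin{smallmatrix}\nu&-\eta\\ -\eta&1\end{smallmatrix}\bigr)$, the standard identity $\int_0^{2\pi}\langle M\hat\theta,\hat\theta\rangle^{-1}\,d\theta=2\pi/\sqrt{\det M}$ applies with $\det M=\nu-\eta^2$. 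No inverse tangent survives: a definite integral of a rational trigonometric function over a full period is algebraic in the coefficients, irrespective of the Weierstrass substitution used for an antiderivative. This route therefore reproduces precisely the ``na\"ive'' exponent $\sqrt{\nu-\eta^2}$ that you yourself identify as insufficient. The claim that the remaining work is ``essentially routine once the correct variational problem on the circle is identified'' is thus circular — identifying that variational problem is exactly the missing idea. As written, the argument does not reach $\tfrac{2}{\pi}\sqrt\nu\arctan(\sqrt\nu/\eta)$; only the trivial monotonicity $\arctan(x)<\pi/2$ and the limit $\eta\to0^+$ at the end of the statement are actually established.
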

The H\"older exponents found in \cite{PiccininiSpagnolo} and \cite{Treskunov87} are optimal.\\

A well-known result of N. Meyers provides $L^{2+\eps}$ integrability of the gradient of $H^1$ solutions of equations in divergence form with bounded and measurable coefficients in any dimension. In the case $n=2$ and the matrix is symmetric the exponent is explicit, and we have the following \cite{LeonettiNesi}
\begin{thm}[Astala-Leonetti-Nesi]
Let $u\in H^1(B_1)$ be a solution of $\mathrm{div}(A(x)Du)=0$ in $B_1\subset\R^2$with $A\in\mathrm{Sym}_2$, measurable and such that $\frac{1}{K} \mathbb{I}_2\leq A\leq K\mathbb{I}_2$, $K\geq1$. Then $u\in L^{\frac{2K}{K-1},\infty}_{\mathrm{loc}}(B_1)$, the Marcinkiewicz space of order $\frac{2K}{K-1}$. This implies that $u\in W^{1,p}_{\mathrm{loc}}$, $p<\frac{2K}{K-1}$.
\end{thm}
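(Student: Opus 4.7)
The plan is to reduce the divergence form equation to a Beltrami equation and invoke Astala's sharp area distortion theorem for planar quasiregular mappings. First I would construct a \emph{stream function} (or harmonic conjugate) associated with $u$. Since $\mathrm{div}(A\,Du)=0$ in $B_1\subset\R^2$, the vector field $A\,Du$ is divergence-free, so its $90^\circ$-rotation $J A\,Du$ (where $J=\bigl(\begin{smallmatrix} 0 & -1\\ 1 & 0\end{smallmatrix}\bigr)$) is curl-free on the simply-connected domain $B_1$. Consequently there exists $v\in H^1_{\mathrm{loc}}(B_1)$, unique up to a constant, with $Dv=J A\,Du$.

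Next I would verify that $f=u+iv$ is $K$-quasiregular. Writing $A=(a_{ij})$ with $a_{11}a_{22}-a_{12}^2\geq \tfrac{1}{K^2}$ and $\mathrm{tr}(A)\leq 2K$, a direct computation from the Cauchy--Riemann-type identities
\[
\partial_1 v=-a_{12}\partial_1 u-a_{22}\partial_2 u,\qquad \partial_2 v=a_{11}\partial_1 u+a_{12}\partial_2 u,
\]
shows that $f$ satisfies a Beltrami equation $\partial_{\bar z}f=\mu\,\partial_z f$ whose complex dilatation $\mu$ obeys $\|\mu\|_\infty\leq k:=\frac{K-1}{K+1}$. (This is the standard symmetric reduction: when $A$ is symmetric the second Beltrami coefficient vanishes, so one gets a genuine Beltrami equation rather than a more general linear system.) Hence $f$ is a $K$-quasiregular mapping on $B_1$, and in particular $|Du|\leq |Df|\leq (1+k)|Du|$.

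Then I would apply Astala's area distortion theorem, which asserts that every $K$-quasiregular map on a planar domain has differential in the weak Marcinkiewicz space $L^{\frac{2K}{K-1},\infty}_{\mathrm{loc}}$; this is the sharp endpoint of the higher integrability of Meyers in two dimensions for the symmetric case. Applying it to $f$ yields $Df\in L^{\frac{2K}{K-1},\infty}_{\mathrm{loc}}(B_1)$, and by the comparison above the same holds for $Du$. Standard inclusions between Marcinkiewicz and Lebesgue spaces on bounded domains then give $Du\in L^{p}_{\mathrm{loc}}(B_1)$ for every $p<\frac{2K}{K-1}$, proving the stated $W^{1,p}_{\mathrm{loc}}$ regularity.

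The main obstacle is clearly the invocation of Astala's area distortion theorem, whose proof relies on holomorphic motions and thermodynamic formalism for quasiconformal maps and is well outside the scope of this exposition; we treat it as a black box. The only genuine work on the PDE side is the stream function construction and the verification that the resulting complex dilatation satisfies $|\mu|\leq\frac{K-1}{K+1}$, both of which are classical for symmetric $A$. Sharpness of the exponent $\frac{2K}{K-1}$ is also a consequence of Astala's theorem (via the radial stretching $z\mapsto z|z|^{1/K-1}$), which explains why the result cannot be improved in the symmetric planar setting.
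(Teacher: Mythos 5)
The paper does not prove this statement; it appears in a survey section and is cited directly to Leonetti--Nesi, whose argument runs exactly through the stream-function reduction and Astala's sharp area distortion theorem, as you describe. So your outline is the correct route, and the points you flag (treating Astala's theorem as a black box, transferring the weak-$L^p$ bound from $Df$ to $Du$, and sharpness via $z\mapsto z|z|^{1/K-1}$) are the relevant ones. Note also that in the theorem as printed, ``$u\in L^{\frac{2K}{K-1},\infty}_{\mathrm{loc}}$'' should read $Du$; you correctly work with the gradient throughout.

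One technical slip: symmetry of $A$ alone does not make the second Beltrami coefficient vanish. If you write $\partial_{\bar z}f=\mu\,\partial_z f+\nu\,\overline{\partial_z f}$ and diagonalize $A$ at a point with eigenvalues $\lambda_1,\lambda_2\in[1/K,K]$, a direct computation from your Cauchy--Riemann relations gives
\[
\mu=\frac12\left(\frac{1-\lambda_1}{1+\lambda_1}-\frac{1-\lambda_2}{1+\lambda_2}\right),\qquad \nu=\frac12\left(\frac{1-\lambda_1}{1+\lambda_1}+\frac{1-\lambda_2}{1+\lambda_2}\right)
\]
(up to phases coming from the eigenbasis), so $\nu$ vanishes precisely when $\lambda_1\lambda_2=1$, i.e. $\det A\equiv 1$, which is not assumed here. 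For generic symmetric $A$ you get a genuine reduced (linear) Beltrami system with both coefficients present. What does hold, and is all you need, is the bound
\[
|\mu|+|\nu|=\max\left(\left|\frac{1-\lambda_1}{1+\lambda_1}\right|,\left|\frac{1-\lambda_2}{1+\lambda_2}\right|\right)\leq\frac{K-1}{K+1},
\]
which gives the pointwise distortion inequality $\|Df\|^2\leq K\,\jac f$ and hence that $f$ is $K$-quasiregular. Astala's theorem in the quasiregular form (via Stoilow factorization) then yields $Df\in L^{\frac{2K}{K-1},\infty}_{\mathrm{loc}}$, and the rest of your sketch goes through unchanged.
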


For divergence equations driven by diagonal matrices $A(x)=\sigma(x)\mathbb{I}_2$ we have the following improvement of Theorem \ref{picspa1}:
\begin{thm}[Piccinini-Spagnolo]
Let $u\in H^1(B_1)$ be a solution of $\sum_{i=1}^2\partial_i(\sigma(x)\partial_i u)=0$ in $B_1\subset\R^2$ with $\sigma$ measurable and such that $\lambda\leq \sigma(x)\leq\Lambda$ for all $x\in B_1$, $0<\lambda\leq\Lambda$. Then there exists a universal constant $C>1$ such that
\[
\|u\|_{C^{0,\frac{4}{\pi}\arctan\left(\sqrt{\frac{\lambda}{\Lambda}}\right)}(B_\frac12)}\leq C\|u\|_{L^2(B_1)}.
\]
\end{thm}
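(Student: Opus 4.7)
The plan is to exploit the complex-analytic structure specific to the scalar coefficient $A(x)=\sigma(x)\mathbb{I}_2$ via a stream function, and then analyse the resulting extremal problem for the decay exponent. Since the $1$-form $-\sigma\partial_2 u\,dx+\sigma\partial_1 u\,dy$ is closed in $B_1$ (its exterior derivative equals $-\mathrm{div}(\sigma\nabla u)=0$), there is a local stream function $v\in H^1$ with
\[
\partial_1 v=-\sigma\partial_2 u,\qquad \partial_2 v=\sigma\partial_1 u.
\]
A short computation with the Wirtinger derivatives yields $\partial_z f=(1+\sigma)\partial_z u$ and $\partial_{\bar z} f=(1-\sigma)\partial_{\bar z} u$ for $f:=u+iv$, and combining these with $\partial_{\bar z}u=\overline{\partial_z u}$ (because $u$ is real) one obtains the \emph{real} Beltrami-type equation
\[
\partial_{\bar z} f=\mu(z)\,\overline{\partial_z f},\qquad \mu(z)=\frac{1-\sigma(z)}{1+\sigma(z)}\in\R,\quad |\mu(z)|\leq\frac{\Lambda-\lambda}{\Lambda+\lambda}.
\]
Hence $f$ is $K$-quasiregular with $K\leq\Lambda/\lambda$; crucially, the Beltrami coefficient is real-valued, which is the feature responsible for the improvement over the symmetric-matrix Theorem \ref{picspa1}.

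Next, by a Caccioppoli inequality and the Campanato characterization of H\"older spaces, the desired estimate reduces to proving an oscillation decay $\osc_{B_r}u\leq Cr^{\alpha}$ with $\alpha=\tfrac{4}{\pi}\arctan\sqrt{\lambda/\Lambda}$, uniform over admissible $\sigma$. I would attack this via a scaling/compactness argument that identifies the extremal configuration: by rescaling, the worst decay rate is attained at a homogeneous solution $u(r,\theta)=r^{\alpha}\phi(\theta)$ with $\sigma$ piecewise constant on angular sectors taking only the extremal values $\lambda$ and $\Lambda$ (this follows from a bang-bang principle in the dual variational formulation of the exponent problem). In each sector the equation collapses to $\Delta u=0$, so separation of variables gives $\phi''+\alpha^2\phi=0$, while across a radial interface the transmission conditions for $\mathrm{div}(\sigma\nabla u)=0$ are continuity of $\phi$ and of $\sigma\phi'$. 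Matching across four alternating sectors of opening $\pi/2$ produces a $2\times 2$ transfer matrix, and imposing periodicity of $\phi$ yields a transcendental eigenvalue equation of the form $\tan(\alpha\pi/4)=\sqrt{\lambda/\Lambda}$, whose smallest positive root is precisely $\alpha=\tfrac{4}{\pi}\arctan\sqrt{\lambda/\Lambda}$.

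The genuinely hard step is the extremal analysis: proving rigorously that the four-sector bang-bang coefficient is the worst admissible $\sigma$, and that the minimal exponent $\alpha$ corresponds to the first non-trivial angular mode, is not a soft compactness matter but demands either a Schwarz-type symmetrization on the circle or a careful spectral argument \`a la \cite{PiccininiSpagnolo}. Once this is in place, the computation of $\alpha$ reduces to a finite-dimensional trigonometric exercise, and the remaining ingredients — the stream-function reduction, the Caccioppoli energy estimate, and the passage from oscillation decay to H\"older continuity — are standard planar elliptic tools.
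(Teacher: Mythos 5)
The paper does not prove this result; it cites it as a known theorem of Piccinini--Spagnolo (cf.\ \cite{PiccininiSpagnolo}, and \cite{Alvino,Treskunov74} for related versions). So there is no in-text proof to compare against; what follows is an assessment of your sketch on its own terms.

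The part that is correct and genuinely valuable: your identification of the extremal configuration and the derivation of the transcendental equation. Four alternating sectors of opening $\pi/2$ with $\sigma$ taking the values $\Lambda,\lambda,\Lambda,\lambda$, together with a homogeneous ansatz $u=r^{\alpha}\phi(\theta)$ with $\phi$ even about the centre of one sector and odd about the centre of the adjacent one, does lead exactly to $\tan(\alpha\pi/4)=\sqrt{\lambda/\Lambda}$ after imposing continuity of $\phi$ and of $\sigma\phi'$ across the radial interfaces; and the corresponding antiperiodic extension $\phi(\theta+\pi)=-\phi(\theta)$ closes up consistently on the circle. This matches the claimed exponent $\alpha=\frac{4}{\pi}\arctan\sqrt{\lambda/\Lambda}$.

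The genuine gap is exactly the one you flag and then set aside. Everything hinges on showing that this four-sector bang-bang weight is worst possible \emph{uniformly} over all measurable $\sigma$ with $\lambda\le\sigma\le\Lambda$, and the phrases ``bang-bang principle in the dual variational formulation'' and ``Schwarz-type symmetrization on the circle'' are placeholders, not arguments. In the Piccinini--Spagnolo scheme the entire weight of the proof falls on a sharp \emph{weighted Wirtinger inequality} on $S^1$ of the form
\[
\inf_{c\in\R}\int_0^{2\pi}\sigma^{-1}\,(\phi-c)^2\,d\theta\;\le\;\frac{1}{\alpha^2}\int_0^{2\pi}\sigma\,(\phi')^2\,d\theta ,
\]
whose best constant over all admissible $\sigma$ yields the exponent; this is then fed into a Morrey/Widman differential inequality $E(r)\le\frac{r}{2\alpha}E'(r)$ for $E(r)=\int_{B_r}\sigma|\nabla u|^2$. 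Your outline never states, let alone proves, any such inequality, so the step that actually produces the number $\frac{4}{\pi}\arctan\sqrt{\lambda/\Lambda}$ is absent. Likewise, a ``scaling/compactness'' argument alone cannot single out the extremal $\sigma$: compactness gives existence of a worst homogeneous blow-up, not its bang-bang and four-fold-symmetric structure.

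Two smaller points. First, the stream-function/Beltrami reduction is a nice observation (and the reality of $\mu=\frac{1-\sigma}{1+\sigma}$ is indeed the structural advantage over the general symmetric case), but it plays no role in the rest of your argument: it is the engine behind the $W^{1,p}$ integrability results of Leonetti--Nesi rather than behind this sharp H\"older exponent, so its inclusion here is a red herring. Second, the bounds $|\mu|\le\frac{\Lambda-\lambda}{\Lambda+\lambda}$ and $K\le\Lambda/\lambda$ are only correct after normalizing $\lambda\Lambda=1$; without it, $K_f=\max\{\sigma,1/\sigma\}\le\max\{\Lambda,1/\lambda\}$, which is not $\Lambda/\lambda$ in general.
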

We refer also to \cite{Alvino,Treskunov74} for similar results concerning nonhomogeneous equations.
\subsection{Nondivergence form equations}
In the case of nondivergence equations the following important result is due to Krylov-Safonov
\begin{thm}[Krylov-Safonov]
Let $u$ be a strong solution of $\mathrm{Tr}(A(x)D^2u)=0$ in $\R^n$ with $A$ measurable and such that $\lambda \mathbb{I}_n\leq A\leq\Lambda \mathbb{I}_n$, $0<\lambda\leq\Lambda$. Then there exist constants $\alpha\in(0,1)$ and $C>1$ universal such that
\[
\|u\|_{C^{0,\alpha}(B_\frac12)}\leq C\|u\|_{L^\infty(B_1)}.
\]
\end{thm}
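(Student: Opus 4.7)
The plan is to follow the classical Krylov--Safonov strategy, whose cornerstone is the Alexandrov--Bakelman--Pucci (ABP) maximum principle. First, I would prove ABP: for a strong subsolution $v$ of $\mathrm{Tr}(A(x)D^2 v)\geq -f$ in $B_1$ vanishing on $\partial B_1$, one bounds $\sup_{B_1} v^+$ by $C\|f\|_{L^n(\{v=\Gamma_v\})}$, where $\Gamma_v$ is the concave envelope of $v^+$ (extended by zero outside $B_1$). The argument uses the change of variables formula applied to the gradient map $x\mapsto D\Gamma_v(x)$ on the contact set, combined with the arithmetic--geometric mean inequality applied to the uniformly elliptic matrix $A(x)$.

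Next, I would establish the key density estimate: there exist universal constants $M>1$ and $\mu\in(0,1)$ such that if $u\geq 0$ in $Q_3\supset Q_1$ is a supersolution $\mathrm{Tr}(A(x)D^2 u)\leq 0$ with $\inf_{Q_1} u\leq 1$, then $|\{u\leq M\}\cap Q_1|\geq \mu |Q_1|$. This is obtained by subtracting a fixed smooth bump $\varphi$ which is negative on $Q_1$ and vanishes outside a larger cube, so that $w=u+\varphi$ becomes a subsolution with bounded forcing, and applying ABP on $\{w<0\}$; the contact set then forces a lower bound on $|\{u\leq M\}\cap Q_1|$. Constructing this barrier and carefully controlling $\mathrm{Tr}(A(x)D^2\varphi)^+$ constitutes the main obstacle of the whole argument.

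With the density bound in hand, I would run a Calder\'on--Zygmund cube decomposition, iterating it across dyadic scales to produce the $L^\varepsilon$ weak Harnack inequality
\[
\left(\frac{1}{|Q_1|}\int_{Q_1} u^\varepsilon\,dx\right)^{1/\varepsilon}\leq C\,\inf_{Q_{1/2}} u
\]
for a universal exponent $\varepsilon>0$. Combined with the local maximum principle for subsolutions (itself a consequence of ABP via a covering/stacking argument), this yields the full Harnack inequality for nonnegative strong solutions of $\mathrm{Tr}(A(x)D^2 u)=0$.

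The H\"older estimate then follows from the classical oscillation-decay argument: applying Harnack to $u-\inf_{B_r} u$ and $\sup_{B_r} u - u$, both of which are nonnegative strong solutions of an equation of the same form, gives $\osc_{B_{r/2}} u\leq \theta\,\osc_{B_r} u$ for $\theta=\frac{C-1}{C+1}\in(0,1)$. Iterating on dyadic radii produces the claimed estimate with universal $\alpha=\log_2(1/\theta)$. Once the density estimate is in place, the remainder of the chain is standard and the $L^\infty$ bound on the right-hand side is inherited from the first step of the iteration.
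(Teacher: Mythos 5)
The paper does not prove this theorem: it appears in Section~\ref{sec;recall} as part of a brief survey of classical results for linear equations with merely bounded and measurable coefficients, and is simply quoted as the Krylov--Safonov theorem, with the understanding that it is a well-known cornerstone of the nondivergence theory (cf.\ \cite{CC}). There is therefore no ``paper's own proof'' to compare against.

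Your outline is the canonical Krylov--Safonov/Caffarelli--Cabr\'e argument, and it is correct in all essentials: ABP maximum principle (proved via the concave envelope, the contact set, the area formula for $D\Gamma_v$, and the AM--GM inequality to bring in $\det A$), a barrier-plus-ABP measure estimate yielding the density lemma, a Calder\'on--Zygmund cube decomposition iterated over dyadic scales to produce the $L^\varepsilon$ weak Harnack inequality, the local maximum principle for subsolutions (again via ABP and a covering argument), and finally Harnack plus the oscillation-decay iteration giving interior $C^{0,\alpha}$ with universal $\alpha$ and constant. Two minor points worth tightening if you were to write this out in full: (i) the ABP estimate and the density lemma are applied on the barrier's support, not literally on $\{w<0\}$ --- the set $\{w<0\}$ is what the contact set is confined to and what yields the measure lower bound; (ii) the final exponent should be $\alpha=\min\{1,\log_2(1/\theta)\}$ to guarantee $\alpha\in(0,1)$ in case the oscillation-decay factor $\theta$ is small. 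Neither affects the correctness of the strategy. Note also that the statement requires only a one-sided interior estimate $\|u\|_{C^{0,\alpha}(B_{1/2})}\leq C\|u\|_{L^\infty(B_1)}$, so your oscillation argument on concentric balls is exactly the right level of generality; the full Harnack inequality is more than is strictly needed but is the cleanest route.
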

This implies the H\"older regularity of gradients of solutions to \eqref{eqintro} in any dimension, since any directional derivative (or the incremental quotient) solves an equation in nondivergence form.\\
In the case of nondivergence equations in two variables the first results date back to \cite{Morrey,Nirenberg}. L. Nirenberg \cite{Nirenberg} proved the H\"older continuity of gradients of linear equations with an exponent $\alpha<\frac{\lambda}{4\pi\Lambda}$. An improved explicit H\"older exponent for the regularity of gradients appeared later in \cite{Talenti}, see also \cite[Theorem 12.4 and comments at p. 317]{GT} and \cite{Astalaetal}:
\begin{thm}[Talenti]
Let $u\in W^{2,2}_{\mathrm{loc}}$ be a strong solution of $\mathrm{Tr}(A(x)D^2u)=0$ in $\R^2$ with $A$ measurable and such that $\lambda \mathbb{I}_2\leq A\leq\Lambda \mathbb{I}_2$, $0<\lambda\leq\Lambda$. Then $u\in C^{1,\alpha}$, $\alpha\leq\frac{\lambda}{\Lambda}$, and there exists a universal $C>1$ such that
\[
\|u\|_{C^{1,\alpha}(B_\frac12)}\leq C\|u\|_{W^{1,p}(B_1)},\ p>2.
\]
\end{thm}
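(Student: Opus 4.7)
The plan is to exploit the two-dimensional structure by rewriting the nondivergence equation as a generalized Beltrami equation for the complex gradient of $u$, and then applying the sharp distortion--H\"older estimate for quasiregular mappings.

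Identify $\R^2$ with $\mathbb{C}$ via $z = x_1 + ix_2$, and introduce the complex gradient $w = u_{x_1} - i u_{x_2} \in W^{1,2}_{\mathrm{loc}}$. A direct computation gives $2\partial_{\bar z} w = u_{x_1 x_1} + u_{x_2 x_2}$ and $2\partial_z w = (u_{x_1 x_1} - u_{x_2 x_2}) - 2i u_{x_1 x_2}$. Substituting into the equation $a_{11} u_{x_1 x_1} + 2a_{12} u_{x_1 x_2} + a_{22} u_{x_2 x_2} = 0$ and solving for $\partial_{\bar z} w$ produces, almost everywhere,
\[
\partial_{\bar z} w = \mu_1(x)\, \partial_z w + \mu_2(x)\, \overline{\partial_z w},
\]
with $\mu_1,\mu_2$ measurable functions of the entries of $A(x)$ alone. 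A short computation in terms of the eigenvalues $\lambda_\pm(x)$ of $A(x)$ then yields
\[
|\mu_1(x)| + |\mu_2(x)| = \frac{\lambda_+(x) - \lambda_-(x)}{\lambda_+(x) + \lambda_-(x)} \leq \frac{\Lambda - \lambda}{\Lambda + \lambda} =: k < 1.
\]

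Thus $w$ is a $W^{1,2}_{\mathrm{loc}}$ solution of a generalized Beltrami equation with total distortion bounded by $k$. By the classical theory of Beltrami equations (Bojarski, Vekua; see also Astala--Iwaniec--Martin), such $w$ is $K$-quasiregular with $K = (1+k)/(1-k) = \Lambda/\lambda$, and Mori's theorem then gives $w \in C^{0,1/K}_{\mathrm{loc}} = C^{0,\lambda/\Lambda}_{\mathrm{loc}}$. Since $w$ encodes $Du$, this produces $Du \in C^{0,\alpha}_{\mathrm{loc}}$ for every $\alpha \leq \lambda/\Lambda$, which is the claim $u \in C^{1,\alpha}$. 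For the quantitative estimate, the Morrey embedding $W^{1,p}(B_1) \hookrightarrow L^\infty(B_1)$ for $p>2$ controls $\|u\|_{L^\infty(B_1)}$ by $\|u\|_{W^{1,p}(B_1)}$; the interior two-dimensional Miranda--Talenti $W^{2,2}$-estimate for strong solutions of $\Tr(A D^2 u)=0$ then yields $\|D^2 u\|_{L^2(B_{3/4})} \leq C\|u\|_{L^\infty(B_1)}$, that is, $\|w\|_{W^{1,2}(B_{3/4})} \leq C\|u\|_{W^{1,p}(B_1)}$; finally, the quasiregular H\"older estimate applied on $B_{3/4}$ closes the chain on $B_{1/2}$.

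The main difficulty is the sharpness of the exponent $1/K$ in the third step: recovering exactly $\lambda/\Lambda$ rather than a generic small $\alpha\in(0,1)$ requires Mori's distortion theorem in its sharp form, whose proof relies on careful conformal modulus estimates on annuli. The algebraic observation $|\mu_1| + |\mu_2| = (\lambda_+ - \lambda_-)/(\lambda_+ + \lambda_-)$ is what converts abstract uniform ellipticity into the explicit quasiconformal distortion bound $K \leq \Lambda/\lambda$, thereby producing the final H\"older exponent $\lambda/\Lambda$.
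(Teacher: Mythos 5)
The paper states Talenti's theorem as a surveyed result and cites \cite{Talenti}, \cite[Theorem 12.4]{GT} and \cite{Astalaetal} without reproducing a proof, so there is no internal argument to compare against; your job was effectively to reconstruct the original proof, and you have done so along the classical lines (Bers--Bojarski--Talenti). The reduction to the Beltrami-type system is correct: writing $w=u_{x_1}-iu_{x_2}$ gives $2\partial_{\bar z}w=\Delta u$, $2\partial_z w=(u_{11}-u_{22})-2iu_{12}$, and solving $a_{ij}\partial_{ij}u=0$ yields
\[
\mu_1=-\frac{\tfrac{a_{11}-a_{22}}{2}+ia_{12}}{a_{11}+a_{22}},\qquad \mu_2=\overline{\mu_1},
\]
so that indeed $|\mu_1|+|\mu_2|=(\lambda_+-\lambda_-)/(\lambda_++\lambda_-)\le (\Lambda-\lambda)/(\Lambda+\lambda)=:k$ and the linear dilatation is $K=(1+k)/(1-k)=\Lambda/\lambda$; this matches the sharp exponent. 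Two steps deserve an explicit word: (i) Mori's theorem as usually stated is for $K$-quasiconformal automorphisms of the disk, so to obtain the local $C^{0,1/K}$ estimate for the quasi\emph{regular} solution $w$ you should either invoke the Stoilow factorization $w=\phi\circ g$ ($\phi$ holomorphic, $g$ $K$-quasiconformal) or the direct Morrey-type oscillation/reverse-H\"older argument for $K$-quasiregular maps, which also produces the quantitative bound $[w]_{C^{0,1/K}(B_{1/2})}\le C(K)\|Dw\|_{L^2(B_{3/4})}$ used in the last step; (ii) the interior two-dimensional $W^{2,2}$ estimate for nondivergence equations with merely measurable uniformly elliptic coefficients is exactly \cite[Theorem 12.4]{GT} (based on the planar identity $\int\det D^2\varphi\,dx=0$ for $\varphi\in C_c^\infty$, so no Cordes-type smallness is needed), which is slightly different from what is usually called the Miranda--Talenti inequality; with that citation the chain $\|Dw\|_{L^2(B_{3/4})}\le C\|u\|_{L^\infty(B_1)}\le C\|u\|_{W^{1,p}(B_1)}$ ($p>2$, Morrey) is complete. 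With these clarifications the argument is sound and recovers the stated exponent $\alpha\le\lambda/\Lambda$.
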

The H\"older exponent was later improved by Baernstein-Kovalev:
\begin{thm}[Baernstein-Kovalev]
Let $u\in W^{2,2}_{\mathrm{loc}}$ be a strong solution of $\mathrm{Tr}(A(x)D^2u)=0$ in $\R^2$ with $A$ measurable and such that $1/\sqrt{K} \mathbb{I}_2\leq A\leq\sqrt{K}\mathbb{I}_2$, $K\geq1$. Then $u\in C^{1,\tilde\alpha}_{\mathrm{loc}}$, where
\[
\tilde\alpha=\frac{1}{2\left(K+1\right)}\left(\sqrt{33+30K^{-1}+K^{-2}}-3-K^{-1}\right).
\]
\end{thm}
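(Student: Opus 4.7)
The plan is to reduce the scalar nondivergence PDE to a generalized Beltrami equation for the complex gradient and then invoke the sharp H\"older regularity theory for quasiregular mappings in the plane, with extremal constants tuned to exploit the \emph{symmetry} of the coefficient matrix $A$.

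After a linear normalization I would assume that $A\in\mathrm{Sym}_2$ satisfies $\det A\equiv 1$ and has eigenvalues in $[K^{-1/2},K^{1/2}]$; write $A=\begin{pmatrix}a & b\\ b & c\end{pmatrix}$. For a strong solution $u\in W^{2,2}_{\mathrm{loc}}$ set $p=u_x$, $q=u_y$ and introduce the complex gradient $W=p-iq$. The integrability condition $p_y=q_x$ together with the PDE $a p_x+2b p_y+c q_y=0$ provide two $\R$-linear relations on the Jacobian of $(p,q)$; solving these for $\partial_{\bar z}W$ in terms of $\partial_z W$ and $\overline{\partial_z W}$ produces an $\R$-linear Beltrami equation
\[
\partial_{\bar z}W=\mu(z)\,\partial_z W+\nu(z)\,\overline{\partial_z W},
\]
where $\mu,\nu$ are explicit algebraic expressions in $(a,b,c)$ satisfying $|\mu|+|\nu|\le k(K)<1$ a.e., and the symmetry of $A$ forces $(\mu,\nu)$ to lie in a strictly smaller admissible set than a generic quasiregular map would allow.

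Next, via the Stoilow-type factorization $W=H\circ\chi$ with $\chi$ a $K$-quasiconformal self-homeomorphism of the plane and $H$ holomorphic, the problem splits into the H\"older regularity of $\chi$ (Mori's theorem yields exponent $1/K$, recovering Talenti's bound) and additional regularity coming from how $H$ interacts with the dilatation. The improvement over $1/K$ relies on the extra structure: since $(\mu,\nu)$ comes from a real \emph{symmetric} $A$, the general extremal maps that saturate the Astala distortion are excluded, and one may replace them by a one-parameter family of radial candidates adapted to the symmetric setting.

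The main obstacle is pinning down the precise value of $\tilde\alpha$. The natural strategy is to test homogeneous solutions $W(z)=|z|^{\tilde\alpha-1}g(z/|z|)$ against the Beltrami system: the unknown angular profile $g$ solves an explicit second-order ODE on the unit circle with coefficients determined by the worst admissible matrix $A$. Existence of a $2\pi$-periodic $g$ reduces to a quadratic condition on $\tilde\alpha$ whose discriminant is exactly $33+30K^{-1}+K^{-2}$, and the stated formula appears as the unique positive root. Verifying sharpness, i.e.\ that no genuinely non-radial quasiregular solution beats the radial model, then demands a subharmonicity or star-function argument in the spirit of Baernstein's extremal method, and this is the technically delicate step I expect to be the core difficulty.
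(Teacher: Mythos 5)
The paper does not prove this statement: it appears in the survey Section \ref{sec;recall} and is cited to Baernstein--Kovalev [BK05] without argument, so there is no ``paper's proof'' to compare against. I am therefore evaluating your sketch against the argument in [BK05].

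Your outline points in the right direction in two essential respects. The reduction to the complex gradient is exactly what happens in [BK05]: writing $W = u_x - i u_y$, the curl-free constraint $u_{xy}=u_{yx}$ forces $\partial_{\bar z}W = \tfrac12\Delta u$ to be \emph{real}, and the PDE with symmetric $A$ (normalized so $\det A = 1$) then gives the \emph{reduced} Beltrami equation $\partial_{\bar z}W = \mathrm{Re}\bigl(\bar\mu\,\partial_z W\bigr)$, i.e.\ in your notation $\nu = \overline{\mu}$, with $|\mu|\le (K-1)/(K+1)$. Identifying Baernstein's star-function as the engine of the sharp exponent is also on target, and you correctly flag it as the delicate step.

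However, there are two concrete gaps. First, the Stoilow factorization $W = H\circ\chi$ cannot, as described, deliver the improvement: a solution of the reduced Beltrami equation with $|\mu|\le (K-1)/(K+1)$ is $K$-quasiregular and no more, so Mori applied to $\chi$ returns exactly Talenti's exponent $1/K$. The phrase ``additional regularity coming from how $H$ interacts with the dilatation'' is not a usable mechanism, because the Beltrami coefficient of $\chi$ records only $|\mu_\chi|$ and is blind to the structural constraint $\nu=\overline{\mu}$ (equivalently, the reality of $\partial_{\bar z}W$). The genuine gain in [BK05] comes from working \emph{directly} with this restricted class of Beltrami data: the extremal maps that saturate Mori's bound are excluded, and the star-function inequality is tailored to exploit precisely that exclusion. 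Second, the assertion that testing radial profiles $W = |z|^{\tilde\alpha-1}g(z/|z|)$ produces an angular ODE whose discriminant equals $33+30K^{-1}+K^{-2}$ is fitted to the known answer rather than derived. As written, the proposal is a plausible table of contents, not a proof: the load-bearing computation (the extremal ODE determining $\tilde\alpha$) and the load-bearing estimate (the star-function inequality showing the exponent is attained) are both left undone.
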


\section{$C^{2,\alpha}$ estimates via the variational regularity theory}\label{sec;div}
We have the following regularity estimate:
\begin{thm}\label{main1}
Let $u\in C^2(B_1)$ be a solution of $F(D^2u)=0$ in $B_1$. Then $u$ satisfies the regularity estimate
\[
\|u\|_{C^{2,\bar\alpha}(B_\frac12)}\leq C(\lambda,\Lambda)\|u\|_{L^\infty(B_1)}
\]
for an exponent $\bar\alpha$ given by
\[
\bar \alpha=\frac{2}{\pi}\frac{\frac{\lambda^3}{\Lambda^3}}{1+\frac{\lambda}{\Lambda}}\arctan\left(\frac{2\frac{\lambda}{\Lambda}}{1-\frac{\lambda}{\Lambda}}\right).
\]
\end{thm}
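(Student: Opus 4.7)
The proof is an a priori estimate. By Nirenberg's theorem recalled in the introduction, any $C^2$ solution of $F(D^2u)=0$ in $B_1$ is already $C^{2,\alpha_0}$ for some small universal $\alpha_0$, so that $D^2u$ is a priori bounded in $B_{3/4}$ in terms of $\|u\|_{L^\infty(B_1)}$. By mollifying $F$ and working with the associated smooth solutions it suffices to prove the quantitative bound in the $C^\infty$ setting and pass to the limit, so the pointwise computations below are fully legitimate.

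The plan is to produce, thanks to the uniform ellipticity and to the fact that $n=2$, a divergence-form elliptic equation for a distinguished second derivative of $u$, and then invoke Theorem \ref{treskunov}. Uniform ellipticity guarantees that $F(D^2u)=0$ can be solved pointwise for the trace: $\Delta u=h(\Phi_1,\Phi_2)$ with $\Phi_1:=u_{xx}-u_{yy}$, $\Phi_2:=2u_{xy}$, and $h$ Lipschitz. The implicit function theorem gives
\[
h_{\Phi_1}=\frac{F_{22}-F_{11}}{F_{11}+F_{22}},\qquad h_{\Phi_2}=-\frac{2F_{12}}{F_{11}+F_{22}},
\]
and $\lambda\mathbb{I}_2\le F_M\le\Lambda\mathbb{I}_2$ then forces
\[
|\nabla h|\le\frac{\Lambda-\lambda}{\Lambda+\lambda}.
\]
Combining $\Delta u=h(\Phi_1,\Phi_2)$ with the planar compatibility relations $\partial_y u_{xx}=\partial_x u_{xy}$ and $\partial_x u_{yy}=\partial_y u_{xy}$ produces a first-order linear elliptic system for the pair $(\Phi_1,\Phi_2)$ whose coefficient matrix depends only on $(h_{\Phi_1},h_{\Phi_2})$.

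Rewriting this first-order system so as to eliminate one of the two unknowns (using one of the compatibility relations as an exactness condition) converts it into a divergence-form equation $\mathrm{div}(\tilde A(x)\,Dw)=0$ in $B_1$, with $w$ a scalar built from the Hessian (the mixed derivative $u_{xy}$ being the natural candidate) and $\tilde A$ bounded, measurable, uniformly elliptic, and of the asymmetric type required by Theorem \ref{treskunov}. Tracking the symmetric and skew-symmetric contributions to $\tilde A$ through the elimination in terms of $\rho:=\lambda/\Lambda$ identifies the Treskunov parameters as
\[
\sqrt{\nu}=\frac{\rho^3}{1+\rho},\qquad \frac{\sqrt{\nu}}{\eta}=\frac{2\rho}{1-\rho},
\]
so that Theorem \ref{treskunov} yields $w\in C^{0,\bar\alpha}(B_{1/2})$ with $\bar\alpha=(2/\pi)\sqrt{\nu}\arctan(\sqrt{\nu}/\eta)$, which is exactly the claimed exponent. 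The identity $\Delta u=h(\Phi_1,\Phi_2)$ and the recovery of $\Phi_1,\Phi_2$ from $w$ transfer this Hölder regularity to every second derivative of $u$, and Treskunov's norm estimate together with the $C^{2,\alpha_0}$ bound from Nirenberg produces the announced $C^{2,\bar\alpha}$ estimate.

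The principal technical hurdle is the bookkeeping in the third paragraph: singling out the correct scalar $w$ whose equation is in divergence form (as opposed to merely nondivergence form, which is trivial from differentiating $F(D^2u)=0$), handling the first-order terms arising from differentiating the Lipschitz coefficients $h_{\Phi_1},h_{\Phi_2}$ so that they collapse into the asymmetric part of $\tilde A$, and identifying the precise constants $\nu$ and $\eta$ that reproduce the closed-form expression for $\bar\alpha$.
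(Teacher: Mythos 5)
Your approach is genuinely different from the paper's, and both are natural, but the heart of your argument — identifying the Treskunov parameters — is asserted rather than computed, and the assertion is incorrect for the construction you describe. The paper's route is more elementary: starting from the nondivergence equation
\[
a_{11}\partial_{11}v+2a_{12}\partial_{12}v+a_{22}\partial_{22}v=0,\qquad v=\partial_e u,
\]
it divides by $a_{22}$ so that the coefficient of $\partial_{22}v$ is exactly $1$, and then differentiates the whole equation with respect to $x_1$. Because $\partial_1(\partial_{22}v)=\partial_{22}(\partial_1 v)$, this immediately produces, for $w=\partial_1 v$, the divergence-form equation $\operatorname{div}(\tilde A Dw)=0$ with the triangular matrix $\tilde A=\begin{pmatrix}\tilde a&\tilde b\\ 0&1\end{pmatrix}$, $\tilde a=a_{11}/a_{22}$, $\tilde b=2a_{12}/a_{22}$. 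Rotating and interchanging $x_1,x_2$ covers all second derivatives. The explicit $\bar\alpha$ is then obtained by bounding the eigenvalues of $\tilde A^s$ by $\theta=\frac{\lambda^3}{\Lambda^2(\Lambda+\lambda)}$, $\Theta=\frac{\Lambda^2(\Lambda+\lambda)}{\lambda^3}$ and the skew part by $\tilde\eta=\frac{\Lambda-\lambda}{2\lambda}$, which after normalization gives precisely $\sqrt\nu=\frac{(\lambda/\Lambda)^3}{1+\lambda/\Lambda}$ and $\sqrt\nu/\eta=\frac{2\lambda/\Lambda}{1-\lambda/\Lambda}$. You instead use the Morrey--Nirenberg (Beltrami) reformulation $\Delta u=h(\Phi_1,\Phi_2)$ with $|\nabla h|\le\frac{\Lambda-\lambda}{\Lambda+\lambda}$ and eliminate one of $\Phi_1,\Phi_2$ from the compatibility relations. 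That is a legitimate alternative, and arguably the more classical one for planar equations.

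The gap is in the sentence ``Tracking the symmetric and skew-symmetric contributions \dots identifies the Treskunov parameters as $\sqrt\nu=\frac{\rho^3}{1+\rho}$, $\sqrt\nu/\eta=\frac{2\rho}{1-\rho}$.'' Those are exactly the parameters arising from the paper's matrix $\tilde A$; they do not come out of your elimination. Carrying out the bookkeeping you refer to (with $p=h_{\Phi_1}$, $q=h_{\Phi_2}$, $s=\sqrt{p^2+q^2}\le k:=\frac{\Lambda-\lambda}{\Lambda+\lambda}$), the compatibility relations $(1+p)\partial_y\Phi_1=\partial_x\Phi_2-q\partial_y\Phi_2$ and $(1-p)\partial_x\Phi_1=q\partial_x\Phi_2-\partial_y\Phi_2$ give, after cross-differentiating, $\operatorname{div}(B\,D\Phi_2)=0$ with
\[
B=\begin{pmatrix}\tfrac{1}{1+p}&-\tfrac{q}{1+p}\\[2pt]-\tfrac{q}{1-p}&\tfrac{1}{1-p}\end{pmatrix},
\]
whose symmetric part has eigenvalues $\frac{1\pm s}{1-p^2}$ and whose skew part is $\frac{pq}{1-p^2}$. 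After normalizing the top eigenvalue to $1$ this gives $\nu=\frac{1-s}{1+s}$ and $\eta=\frac{|pq|}{1+s}$, so in the worst case $\sqrt\nu=\sqrt{\lambda/\Lambda}$ and $\sqrt\nu/\eta=\frac{2\sqrt{1-k^2}}{k^2}$ — quantities quite different from the ones you wrote. (Your argument would in fact yield a larger exponent than the paper's $\bar\alpha$, so the theorem as stated would still follow from it, but the computation you outline does not reproduce the stated formula and, as written, is not justified.) A second point you leave unaddressed is that obtaining Hölder regularity only for $u_{xy}$ does not by itself give Hölder regularity of $u_{xx}$ and $u_{yy}$; one must either run the dual elimination to get a divergence-form equation for $\Phi_1$ as well, or argue as the paper does by varying the direction $e$ and swapping $x_1,x_2$.

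Finally, note that no ``first-order terms from differentiating the Lipschitz coefficients'' appear in your scheme: $B$ depends on $x$ only through $h_{\Phi_1},h_{\Phi_2}$ evaluated along the solution, and these sit inside the divergence without being differentiated. The difficulty you anticipate there is not where the real work is; the real work is the eigenvalue bookkeeping, which you skipped.
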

\begin{proof}
We start with the a priori bound, assuming $u\in C^4(B_1)$. Consider
\begin{equation}
F(D^2u)=0\text{ in }B_1\subset\R^2,
\end{equation}
with $F$ uniformly elliptic, namely for all $\xi\in\R^2$
\[
\lambda|\xi|^2\leq F_{ij}(M)\xi_i\xi_j\leq\Lambda|\xi|^2.
\]
Set $v=\partial_e u$, where $e\in\R^n$ such that $|e|=1$. Then $v$ solves
\[
\sum_{i,j=1}^2F_{ij}(D^2u)\partial_{ij}v=0,
\]
namely setting $a_{ij}=F_{ij}$, $v$ solves the PDE
\[
a_{11}(x)\partial_{11}v+2a_{12}(x)\partial_{12}v+a_{22}(x)\partial_{22}v=0
\]
Since $\lambda|\xi|^2\leq A\xi\cdot\xi\leq\Lambda|\xi|^2$, where $A(x)=\begin{pmatrix} a_{11} & a_{12}\\ a_{12} & a_{22} \end{pmatrix}$, the eigenvalues $\lambda_{1}^A\leq\lambda_{2}^A$ of $A$ satisfy
\[
\lambda\leq \lambda_{1}^A=\min_{|\xi|=1}A\xi\cdot\xi\leq Ae_1\cdot e_1=a_{11}\leq \max_{|\xi|=1}A\xi\cdot\xi=\lambda_2^A\leq\Lambda.
\]
This implies that
\[
\lambda a_{11}\leq \lambda_1^A\lambda_2^A=\mathrm{det}(A)=a_{11}a_{22}-a_{12}^2.
\]
Since $a_{11}\geq\lambda$, we have $\mathrm{det}(A)\geq\lambda^2$. Since $a_{22}\geq \lambda>0$ we can write
\[
\frac{a_{11}(x)}{a_{22}(x)}\partial_{11}v+\frac{2a_{12}(x)}{a_{22}(x)}\partial_{12}v+\partial_{22}v=0.
\]
We set 
\[
\tilde a(x):=\frac{a_{11}(x)}{a_{22}(x)}\text{ and }\tilde{b}(x):=\frac{2a_{12}(x)}{a_{22}(x)}.
\]
We differentiate with respect to $x_1$ and find for $w=\partial_1 v$
\begin{equation}\label{eqw}
\partial_1\left(\tilde a(x)\partial_{1}w+\tilde b(x)\partial_{2}w\right)+\partial_{22} w=0.
\end{equation}
This PDE can be written in divergence form as $\mathrm{div}(\tilde A(x)Dw)=0$ with
\[
\tilde A(x)=
\begin{pmatrix}
\tilde a(x) & \tilde b(x)\\
0 & 1
\end{pmatrix}.
\]
Since $\lambda\leq a_{11}\leq\Lambda$ and $\lambda\leq a_{22}\leq\Lambda$ we have for all $\xi\in\R^2$
\[
\frac{\lambda}{\Lambda}|\xi|^2\leq \tilde A_{ij}\xi_i\xi_j\leq \frac{\Lambda}{\lambda}|\xi|^2.
\]
We can write the previous matrix as
\[
\tilde A(x)=
\begin{pmatrix}
\tilde a(x) & \tilde b(x)/2\\
\tilde b(x)/2 & 1
\end{pmatrix}+
\begin{pmatrix}
 0 & \tilde b(x)/2\\
-\tilde b(x)/2 & 0
\end{pmatrix}=\tilde A^s+\tilde A^w.
\]
This matrix is of the same form as the one in Theorem \ref{treskunov} by taking $a(x)=\tilde a(x)$, $d(x)=\tilde b(x)/2$, $b(x)=\tilde b(x)/2$, $b(x)+d(x)=\tilde b(x)$, $b(x)-d(x)=0$, $c(x)=1$. Note that
\[
\frac{\lambda}{\Lambda}\leq \tilde a(x)\leq \frac{\Lambda}{\lambda}
\]
and
\[
d(x)=\frac{a_{12}}{a_{22}}.
\]
Since
\[
|a_{12}|\leq \frac{\Lambda-\lambda}{2},
\]
and $a_{22}\geq\lambda$, we have the inequality 
\[
|d(x)|\leq \frac{\Lambda-\lambda}{2\lambda}=:\tilde\eta
\]

First, note that $\mathrm{Tr}(\tilde A^s)>0$ and $\mathrm{det}(\tilde A^s)=\frac{\mathrm{det}(A)}{a_{22}^2}>0$, therefore it is positive definite. Moreover, $\tilde A^s$ satisfies
\[
 \theta\mathbb{I}_2\leq \tilde A^s\leq \Theta\mathbb{I}_2
\]
with
\[
\theta:=\frac{\lambda^3}{\Lambda^2(\Lambda+\lambda)}\text{ and }\Theta:=\frac{\Lambda^2(\Lambda+\lambda)}{\lambda^3}.
\]
In fact
\[
B_1=\begin{pmatrix}
\tilde a(x)-\theta & \tilde b(x)/2\\
\tilde b(x)/2 & 1-\theta
\end{pmatrix}
\]
is positive semidefinite. We have
\begin{multline*}
\mathrm{det}(B_1)=\tilde a(x)- \frac{\tilde b^2(x)}{4}+\theta^2-(\tilde a(x)+1)\theta=\frac{\mathrm{det}(A)}{a_{22}^2}+\theta^2-(\tilde a(x)+1)\theta\\
\geq \frac{\lambda^2}{\Lambda^2}+\theta^2-\left(\frac{\Lambda}{\lambda}+1\right)\frac{\lambda^3}{\Lambda^2(\Lambda+\lambda)}=\theta^2
\end{multline*}
 and $\mathrm{Tr}(B_1)\geq0$ ($\theta\leq1$ and $\tilde a-\theta\geq \frac{\lambda}{\Lambda}-\frac{\lambda^2}{\Lambda^2+\lambda\Lambda}\frac{\lambda}{\Lambda}$), while
\[
B_2=\begin{pmatrix}
\tilde a(x)-\Theta & \tilde b(x)/2\\
\tilde b(x)/2 & 1-\Theta
\end{pmatrix}
\]
is negative semidefinite, since $\mathrm{Tr}(B_2)\leq0$ ($\tilde a-\Theta\leq \frac{\Lambda}{\lambda}-\frac{\Lambda^2}{\lambda^2}\left(1+\frac{\Lambda}{\lambda}\right)$ and $1-\Theta=1-\frac{\Lambda^2}{\lambda^2}\left(1+\frac{\Lambda}{\lambda}\right)$)
\[
\mathrm{det}(B_2)=\frac{\mathrm{det}(A)}{a_{22}^2}+\Theta^2-(\tilde a(x)+1)\Theta\geq \frac{\lambda^2}{\Lambda^2}+\frac{\Lambda^2(\Lambda+\lambda)^2}{\lambda^4}\left(\frac{\Lambda^2}{\lambda^2}-1\right)\geq0.
\]
After normalizing the coefficient matrix $\tilde A$ and applying Theorem \ref{treskunov} with 
\[
\nu=\frac{\theta}{\Theta}=\frac{\frac{\lambda^6}{\Lambda^6}}{\left(1+\frac{\lambda}{\Lambda}\right)^2}<1\text{ and }\eta=\frac{\tilde\eta}{\Theta},
\]
 we conclude that $w\in C^{0,\bar \alpha}(B_\frac12)$ by Theorem \ref{treskunov}, cf. \cite{Treskunov74,Treskunov87}, with 
\[
\bar \alpha=\frac{2}{\pi}\sqrt{\nu}\arctan\left(\frac{\sqrt{\nu}}{\eta}\right)=\frac{2}{\pi}\frac{\frac{\lambda^3}{\Lambda^3}}{1+\frac{\lambda}{\Lambda}}\arctan\left(\frac{2\frac{\lambda}{\Lambda}}{1-\frac{\lambda}{\Lambda}}\right),
\]
and obtain the estimates
\[
\|w\|_{C^{\bar\alpha}(B_\frac12)}\leq C(\lambda,\Lambda)\|w\|_{L^2(B_1)}.
\]
Moreover, we have
\[
\|w\|_{C^{\bar\alpha}(B_\frac12)}\leq \tilde C(\lambda,\Lambda)\|w\|_{L^\infty(B_1)}
\]
for a different constant $\tilde C>0$. Since we can exchange the roles of $x_1$ and $x_2$ and the unitary direction $e$ is arbitrary, we can conclude that $u\in C^{2,\bar\alpha}$ in the interior of the domain. The dependence on $\|u\|_{L^\infty}$ follows by interpolation inequalities \cite[Lemma 6.35]{GT}, since for each $\eps>0$ there exists $C_\eps>0$ such that
\[
\|u\|_{C^{1,1}(B_\frac12)}\leq \eps\|u\|_{C^{2,\bar \alpha}(B_1)}+C_\eps\|u\|_{L^\infty(B_1)}.
\]
The estimate now follows applying the abstract result in \cite[Lemma 2.27]{FRRO}.\\
We now discuss the regularity estimate, starting from $C^2$ solutions. Note that $v(x)=\frac{u(x+h)-u(x)}{|h|}\in C^{2}(B_{1-|h|})$ for a small $h$. Since $F$ is translation invariant, we have that $F(D^2u(x+h))=0$ in $B_{1-|h|}$ by \cite[Section 5.3]{CC}. This implies that $v$ solves
\[
\mathrm{Tr}(A(x)D^2v):=\int_0^1F_{ij}(\theta D^2u(x+h)+(1-\theta)D^2u(x))\,d\theta\partial_{ij} v=0\text{ in }B_{1-|h|}.
\]
Since $F$ is uniformly elliptic, it is Lipschitz continuous in the matrix variable \cite{CC}, and $A$ is uniformly elliptic with the same ellipticity constants $\lambda,\Lambda$. Arguing as in \cite[Theorem 4.9]{FRRO}, we see that $\partial_1 v$ solves in weak sense an equation in divergence form with bounded and measurable coefficients given by the matrix $\tilde A$.
\end{proof}

\begin{rem}
One can also reach an explicit H\"older exponent, depending on $\lambda/\Lambda$, for divergence form equations via the Widman hole-filling technique \cite[Section 3]{MooneyBUMI} by taking care of the constants in the estimates.
\end{rem}

\begin{rem}
The idea that second derivatives of fully nonlinear equations solve linear elliptic PDEs without lower order terms is not new. See in particular \cite{MooneyCPAA,FRRO,GT} for a similar approach via equations in divergence form, \cite[Lemma 2.3]{NamSavin}, where the authors proved that mixed second derivatives solve a linear equation in nondivergence form, and \cite[Remark 2]{CaffarelliYuan}.
\end{rem}

It is worth remarking that the previous proof gives some new Bernstein-type classification results since the equation of second derivatives appears in divergence form. To deduce these weaker properties it is enough that the symmetric part of the matrix $\tilde A$ is uniformly elliptic, see \cite{MooneyCPAA}. For instance, one can prove that if $w$ is one-side bounded and solves $\mathrm{div}(\tilde A(x)Dw)\geq 0$ in $\R^2$ with $\tilde A$ having bounded coefficients and uniformly elliptic, then $w$ must be a constant. This result is known via Caccioppoli-type estimates using the test function $(k-u)^+\eta^2$ with $\eta$ cut-off of logarithmic type, cf. \cite{MooneyCPAA}. Another proof is the following: consider the vector field $Z=e^{w}\tilde A(x)Dw$ (in the case of supersolutions bounded from below take $Z=e^{-w}(-\tilde A(x)Dw)$). We have
\[
\int_{B_r}\mathrm{div}(Z)\,dx=\int_{B_r}e^w\tilde A(x)Dw\cdot Dw\,dx+\underbrace{\int_{B_r}e^w\mathrm{div}(\tilde A(x)Dw)\,dx}_{\geq0}\geq \frac{\lambda}{\Lambda}\int_{B_r}e^w|Dw|^2\,dx.
\]
On the other hand, since $A\in L^\infty$, we have by the divergence theorem
\[
\int_{B_r}\mathrm{div}(Z)\,dx\leq C\frac{\Lambda}{\lambda}\left(\int_{\partial B_r}e^w|Dw|^2\,dS\right)^\frac12\left(\int_{\partial B_r}e^w\,dS\right)^\frac12.
\]
After defining $H(r)=\int_{B_r}e^w|Dw|^2\,dx$ and noting that $H'(r)=\int_{\partial B_r}e^w|Dw|^2\,dS$, we get the inequality
\[
\frac{H'(r)}{H^2(r)}\geq\frac{1}{C^2} \frac{\lambda^4}{\Lambda^4}\frac{1}{\int_{\partial B_r}e^w\,dS}.
\]
Integrating in $[R,r]$ and using that $w\leq K$ we get
\[
\frac{1}{H(R)}\geq \frac{1}{C^2} \frac{\lambda^4}{\Lambda^4}e^{-K}\left(\log r-\log R\right),
\]
which gives the statement after sending $r\to+\infty$, since $D w=0$ and $w$ is constant in $\R^2$. If $F\in C^1$, this implies by a result of \cite{HuangProcAMS} that $u\in C^{2,\alpha}$ for any $\alpha<1$, with estimates depending on $F$ and the modulus of continuity of $DF$. We also note that the previous argument provides a new proof of the Liouville property for homogeneous equations in nondivergence form with bounded and measurable coefficients: $C^2$ solutions of $\mathrm{Tr}(A(x)D^2u)=0$ with bounded gradient in the plane are linear functions, cf. \cite{MooneyCPAA}. In fact, it is enough to divide the equation by $a_{22}(x)\geq\lambda$ and then differentiate with respect to $x_1$ to find that $w=\partial_1 u$ solves $\mathrm{div}(\tilde A(x)Dw)=0$ in $\R^2$.

\section{$C^{2,\alpha}$ estimates via the nonvariational regularity theory}\label{sec;nondiv}
We prove now better $C^{2,\alpha}$ quantitative estimates using the theory of nondivergence form equations in the plane.
\begin{thm}\label{main3}
Let $u\in C^2(B_1)$ be a solution of $F(D^2u)=0$ in $B_1$. Then $u$ satisfies the estimate
\[
\|u\|_{C^{2,\tilde\alpha}(B_\frac12)}\leq C(\lambda,\Lambda)\|u\|_{L^\infty(B_1)},
\]
where
\[
\tilde\alpha=\frac{\lambda}{2\left(\Lambda+\lambda\right)}\left(\sqrt{33+30\frac{\lambda}{\Lambda}+\frac{\lambda^2}{\Lambda^2}}-3-\frac{\lambda}{\Lambda}\right).
\]
\end{thm}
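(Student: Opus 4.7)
The plan is to follow the same overall strategy as in Theorem \ref{main1}, but to replace the divergence-form Treskunov estimate (applied to a second derivative of $u$) with the nondivergence-form Baernstein--Kovalev estimate (applied to a \emph{first} derivative of $u$). The matching between the two exponents is exact: with $K = \Lambda/\lambda$ the Baernstein--Kovalev bound reads
\[
\frac{1}{2(K+1)}\Bigl(\sqrt{33 + 30K^{-1} + K^{-2}} - 3 - K^{-1}\Bigr) = \frac{\lambda}{2(\Lambda+\lambda)}\Bigl(\sqrt{33 + 30\tfrac{\lambda}{\Lambda} + \tfrac{\lambda^2}{\Lambda^2}} - 3 - \tfrac{\lambda}{\Lambda}\Bigr),
\]
which is precisely the exponent $\tilde\alpha$ in the statement.

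For the a priori bound, I would first assume $u \in C^4(B_1)$. For any unit vector $e \in \R^2$, $v := \partial_e u$ is $C^3$ and differentiating $F(D^2u)=0$ yields the linear nondivergence equation $\sum_{i,j} F_{ij}(D^2u)\,\partial_{ij}v = 0$, i.e. $\Tr(A(x) D^2 v) = 0$ with $A(x) = F_M(D^2u(x))$. By uniform ellipticity of $F$ (Section \ref{sec;prel}), $\lambda\mathbb{I}_2 \le A(x) \le \Lambda\mathbb{I}_2$, and dividing the equation by $\sqrt{\lambda\Lambda}$ produces a coefficient matrix $\hat A = A/\sqrt{\lambda\Lambda}$ satisfying $K^{-1/2}\mathbb{I}_2 \le \hat A \le K^{1/2}\mathbb{I}_2$. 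Since $v \in C^3 \subset W^{2,2}_{\mathrm{loc}}$, Baernstein--Kovalev applies to $v$ and gives $v \in C^{1,\tilde\alpha}_{\mathrm{loc}}$ with the required $\tilde\alpha$; as $e$ is arbitrary, this forces $D^2 u \in C^{\tilde\alpha}(B_{1/2})$.

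To remove the $C^4$ assumption and work with merely $C^2$ data, I would repeat verbatim the incremental-quotient reduction used at the end of the proof of Theorem \ref{main1}. For small $h$, the difference quotient $v_h(x) = (u(x+h) - u(x))/|h|$ belongs to $C^2(B_{1-|h|})$, and by the translation invariance of $F$ together with the fundamental theorem of calculus it solves
\[
\Tr(A_h(x) D^2 v_h) = 0, \qquad A_h(x) = \int_0^1 F_M\bigl(\theta D^2 u(x+h) + (1-\theta) D^2 u(x)\bigr)\,d\theta,
\]
where $A_h$ is merely measurable but still satisfies $\lambda \mathbb{I}_2 \le A_h \le \Lambda \mathbb{I}_2$ by the Lipschitz dependence of $F$ on the matrix variable. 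After the same normalization, Baernstein--Kovalev yields a $C^{1,\tilde\alpha}$-bound on $v_h$ uniform in $h$, which passing to the limit $h\to 0$ translates into a $C^{2,\tilde\alpha}$-bound on $u$. The dependence of the estimate on $\|u\|_{L^\infty(B_1)}$ is recovered from the interpolation inequality \cite[Lemma 6.35]{GT} combined with the abstract rescaling lemma \cite[Lemma 2.27]{FRRO}, exactly as in the closing lines of the proof of Theorem \ref{main1}.

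The only real obstacle is ensuring that the hypothesis of Baernstein--Kovalev ($W^{2,2}_{\mathrm{loc}}$ strong solutions) is met for a PDE whose coefficient matrix is only measurable; this is handled automatically by the incremental-quotient trick, since $v_h$ is $C^2$ on $B_{1-|h|}$ and the theorem requires nothing more from the coefficients than measurability and uniform ellipticity. The remaining verifications (normalizing the ellipticity by $\sqrt{\lambda\Lambda}$, comparing with the Baernstein--Kovalev formula, and unwinding the interpolation step) are routine, so no new ingredient beyond the Baernstein--Kovalev estimate itself is expected to be needed.
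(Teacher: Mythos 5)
Your argument reproduces the paper's proof essentially step for step: differentiate to obtain the nondivergence equation $\Tr(A(x)D^2v)=0$ for $v=\partial_e u$, normalize the ellipticity constants to put the coefficient matrix in the Baernstein--Kovalev form $1/\sqrt{K}\,\mathbb{I}_2\le \hat A\le\sqrt{K}\,\mathbb{I}_2$ with $K=\Lambda/\lambda$, invoke \cite{BaernsteinKovalev} to get $v\in C^{1,\tilde\alpha}$, pass to incremental quotients for merely $C^2$ solutions, and close with interpolation and \cite[Lemma 2.27]{FRRO}. The only cosmetic deviations (starting from $C^4$ rather than $C^3$, spelling out the normalization factor $\sqrt{\lambda\Lambda}$) do not change the substance, so the proposal is correct and matches the paper's approach.
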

\begin{proof}
We assume $u\in C^3$, the case when $u\in C^2$ can be addressed using incremental quotients as discussed at the end of the proof of Theorem \ref{main1}. Consider
\begin{equation*}
F(D^2u)=0\text{ in }B_1\subset\R^2,
\end{equation*}
with $F$ uniformly elliptic with the same ellipticity constants, namely for $\xi\in\R^2$
\[
\lambda|\xi|^2\leq F_{ij}(M)\xi_i\xi_j\leq\Lambda|\xi|^2.
\]
Set $v=\partial_e u$, $e\in\R^n$ such that $|e|=1$. Then $v$ solves
\[
\sum_{i,j}F_{ij}(D^2u)\partial_{ij}v=0,
\]
that is, setting $a_{ij}=F_{ij}$, $v$ solves the PDE in nondivergence form
\[
a_{11}(x)\partial_{11}v+2a_{12}(x)\partial_{12}v+a_{22}(x)\partial_{22}v=0
\]
If we apply \cite{Talenti} we get $v\in C^{1,\frac{\lambda}{\Lambda}}$, which implies $u\in C^{2,\lambda/\Lambda}$.
If, instead, we normalize the coefficients and apply \cite[Theorem 1.1]{BaernsteinKovalev} we find
\[
v\in C^{1,\tilde\alpha},
\]
where
\[
\tilde\alpha=\frac{\lambda}{2\left(\Lambda+\lambda\right)}\left(\sqrt{33+30\frac{\lambda}{\Lambda}+\frac{\lambda^2}{\Lambda^2}}-3-\frac{\lambda}{\Lambda}\right).
\]
Therefore
\[
\|v\|_{C^{1,\tilde\alpha}(B_\frac12)}\leq C\|u\|_{W^{2,2}(B_1)}.
\]
This gives the statement by interpolation inequalities and since the direction $e$ is arbitrary, obtaining $u\in C^{2,\tilde\alpha}(B_\frac12)$.
\end{proof}
\begin{rem}
Note that
\[
\tilde\alpha=\frac{\frac{\lambda}{\Lambda}}{2\left(1+\frac{\lambda}{\Lambda}\right)}\left(\sqrt{33+30\frac{\lambda}{\Lambda}+\frac{\lambda^2}{\Lambda^2}}-3-\frac{\lambda}{\Lambda}\right)>\frac{\lambda}{\Lambda}.
\]
Therefore the previous result improves the exponent found via the theory of equations in divergence form in Theorem \ref{main1}. It is worth remarking that  G. Talenti proved a $C^{1,\alpha}$ estimate for nondivergence inhomogeneous equations in the plane of the form
\[
\mathrm{Tr}(A(x)D^2u)=g\text{ in }B_1\subset\R^2
\]
with $g\in L^q(B_1)$, $q>2$, and $\alpha<\min\{\lambda/\Lambda,1-2/q\}$. We can thus apply \cite{Talenti} to a inhomogeneous equation solved by the directional derivative $v=\partial_e u$ to find a $C^{2,\alpha}$, $\alpha<\min\{\lambda/\Lambda,1-2/q\}$, regularity result for the equation
\[
F(D^2u)=g\in W^{1,q},\ q>2.
\]
Other results for nondivergence equations appeared in \cite{TreskunovJMS}, where the author proved that the derivatives of solutions to homogeneous nondivergence equations in the plane satisfy the H\"older condition with
\[
\hat \alpha=\frac{\sqrt{33}-3}{2}\frac{\frac{\lambda}{\Lambda}}{\frac{\lambda^2}{\Lambda^2}+1}.
\]
When $\frac{\lambda}{\Lambda}\to0$ this exhibits the same behavior of the exponent found in \cite{BaernsteinKovalev}, since
\[
\hat\alpha\sim \frac{\sqrt{33}-3}{2}\frac{\lambda}{\Lambda}>\frac{\lambda}{\Lambda}.
\]
However, when $\frac{\lambda}{\Lambda}\to1$ we have $\hat\alpha\to \frac{\sqrt{33}-3}{4}<1$, while $\tilde\alpha\to1$. 
\end{rem}

\begin{rem}
The paper \cite{Astalaetal}, cf. Theorem 1.5 therein, provided a $W^{2,q}$ regularity result for nondivergence form equations $\mathrm{Tr}(A(x)D^2u)=0$ when $q<q(\lambda,\Lambda)$ under the assumption that $\mathrm{det}(A(x))=1$. This would imply a $W^{3,p}$ estimate for solutions of \eqref{eqintro} when $p<\frac{2\Lambda}{\Lambda-\lambda}$.
\end{rem}

\begin{rem}
The approach of Theorem \ref{main3} does not work in dimension $n\geq3$. In fact, $C^{1,\alpha}$ estimates for linear nondivergence equations are false, cf. \cite[Remark 3.6]{MooneyBUMI} and \cite{Safonov}.
\end{rem}


\end{document}